\documentclass[a4paper,11pt]{amsart} 
\usepackage{amscd,amssymb,amsmath,amsthm}
\usepackage{amsfonts}
\usepackage{graphicx,epsf}
\usepackage{enumerate,color}
\usepackage{a4wide}
\usepackage[mathcal]{euscript}

\usepackage{epstopdf}
\usepackage{hyperref}
\usepackage{cleveref}
\usepackage{epsfig}

  \usepackage{algorithm}
  \usepackage{algpseudocode}
  \newcommand*\Let[2]{\State #1 $\gets$ #2}


\newtheorem{theorem}{Theorem}[section]
\newtheorem{lemma}[theorem]{Lemma}
\newtheorem{proposition}[theorem]{Proposition}


\newcommand{\R}{\mathbb{R}}

\newcommand{\Pro}{\mathbb{P}}

\newcommand{\vecu}{\text{u}}
\newcommand{\vecm}{\text{m}}

\newcommand{\conu}{U}
\newcommand{\conv}{V}
\newcommand{\conf}{M}

\DeclareMathOperator*{\argmin}{arg\,min}

\usepackage{todonotes}


\begin{document}

	\title[Variational Reconstruction for Dynamic X-ray Tomography]{A Variational Reconstruction Method for Undersampled Dynamic X-ray Tomography based on Physical Motion Models}
	\author[M. Burger, H. Dirks, L. Frerking, A. Hauptmann, T. Helin, and S. Siltanen]{}
	\date{\today}
\maketitle

\centerline{\scshape Martin Burger$^{\rm a}$, Hendrik Dirks$^{\rm a}$, Lena Frerking$^{\rm a}$, Andreas Hauptmann$^{\rm b}$}
\centerline{\scshape Tapio Helin$^{\rm c}$, and Samuli Siltanen$^{\rm c}$}
\medskip
\centerline{\footnotesize \noindent$^{\rm a}$Institut f\"ur Numerische und Angewandte Mathematik, Westf\"alische Wilhelms-Universit\"at (WWU)
M\"unster, M\"unster, Germany.} 
\centerline{\footnotesize \noindent$^{\rm b}$Department of Computer Science, University College London, London, United Kingdom }
\centerline{\footnotesize \noindent$^{\rm c}$Department of Mathematics and Statistics, University of Helsinki, Helsinki, Finland }

\begin{abstract}
In this paper we study the reconstruction of moving object densities from undersampled dynamic X-ray tomography in two dimensions. A particular motivation of this study is to use realistic measurement protocols for practical applications, i.e. we do not assume to have a full Radon transform in each time step, but only projections in few angular directions. This restriction enforces a space-time reconstruction, which we perform by  incorporating physical motion models and regularization of motion vectors in a variational framework. The methodology of optical flow, which is one of the most common methods to estimate motion between two images, is utilized to formulate a joint variational model for reconstruction and motion estimation. 
 
We provide a basic mathematical analysis of the forward model and the variational model for the image reconstruction. Moreover, we discuss the efficient numerical minimization based on alternating minimizations between images and motion vectors. A variety of results are presented for simulated and real measurement data with different sampling strategy. A key observation is that random sampling combined with our model allows reconstructions of similar amount of measurements and quality as a single static reconstruction.
\end{abstract}


\section{Introduction}\label{sec:intro}

Tomographic image reconstruction is probably the most popular inverse problem, with a mathematical history of now 100 years after the seminal work of Radon \cite{Radon1917} and enormous impact on applications for more than 50 years \cite{Cormack1963,Shepp1974,Smith1977}. Nonetheless, it still offers a wealth of mathematical problems, most of them being driven by concrete practical issues, e.g. limited angles or limited field-of-view. A question that receives particular attention in the last years is dynamic tomography, i.e. time-dependent projection measurements of a non-stationary object. Of particular interest are moving objects, e.g. organs in medical computerized tomography (CT), which can lead to significant artifacts in a stationary reconstruction even in the case of slow motion. There have been several studies to tackle this problem, for instance by gating, where projections are either only taken at specific time instances of a periodic movement or by choosing only those projections after the measurement, typically used for imaging of the beating heart \cite{Achenbach2001,Trabold2003}. More recent studies estimate the motion to perform a motion compensated reconstruction \cite{Blondel2004,Li2005}. The problem of motion compensation is analyzed in a rigorous mathematical framework in \cite{Hahn2014,Hahn2016a,Hahn2016}. However, all of the techniques above aim to reconstruct a static CT image from dynamic data. In this work we are rather interested in the reconstruction of the object and the dynamics in space and time, and hence are not limited to specific movements. For this purpose we discuss a joint variational approach that incorporates physical motion models and regularization of motion vectors to achieve improved reconstruction results. We use the well-known optical flow constraint in spatial dimension two, i.e. we assume that the intensity of the images we aim to reconstruct is constant over time. The approach is however extendable to three-dimensional density reconstruction, where an even more physical modelling with a continuity equation constraining the dynamics is possible, reminiscent of optimal transport type approaches (cf. \cite{Bru10}).

A particular goal of our study is to use realistic measurement protocols for practical applications, i.e. we do not assume to have a full Radon transform in each time step, but only projections in few angular directions. Obviously in real life tomographs usually acquire one angular direction after the other, so one would need to work with single angles in an ideal modeling. However, for a suitable mathematical model we can compare the multiple time scales appearing during the process: the scale $\tau_P$ needed to take a projection at fixed angle, the typical time scale $\tau_R$ to perform a rotation, and finally the time scale $\tau_M$ of the object motion. The latter can be determined as a ratio of the spatial size of the objects one is interested in and their speed. If one finds that one or two of those time scales are smaller than others by magnitude, they can usually be ignored. For slowly moving objects with $\tau_M \gg \tau_P$ and $\tau_M \gg \tau_R$, it is indeed realistic to assume that a full (or limited but not small) set of angles can be acquired in each time step. Even in this case one might be limited by other factors however, such as dose considerations in medical X-ray tomography,prohibiting to acquire multiple full X-rays consecutively. Thus, we will focus on the important case of few angles per time step, which does not allow to perform separate static reconstructions at single steps, but indeed enforces to perform space-time reconstruction. Without additional prior information on the dynamics, the latter is highly underdetermined and hence we shall incorporate physical motion models into variational regularization methods, which will be based on recently proposed motion corrected image reconstruction techniques, cf. \cite{Dir15, dirks2016joint, dirks2016multiframe, Frerking2016,suhr2015variational}. A preliminary application similar to ours is discussed in \cite{Huynh2017}. In order to deal with the different possibilities to measure few angles, we use a time-dependent forward operator, which is also the main change in the method compared to \cite{Dir15}. Hence, we will keep the analysis in this paper rather short and mainly highlight the needed modifications in a time-continuous setting. Moreover, we discuss different time discretizations induced by measurement times and the corresponding time-discrete motions. 

The main focus of the paper is the computational side and a detailed comparison of possible results in different measurement (sampling) setups, restricting ourselves to a two-dimensional setup (one projection being a single line integral), which allows to gain good insight into the problem. We will consider the following realistic cases:
\begin{itemize}
\item {\bf Small angular increments: } In some cases, the object of interest can be very dynamic in relation to the rotation time scale, i.e. $\tau_R \gg \tau_M$. Then, the only way to obtain information regarding the dynamics is to apply small rotations, e.g. small increments between the angles in consecutive time steps. If $\tau_P < \tau_M$, a sufficiently small angular increment usually achieves $\tau_R \sim \tau_M$. Since the object of interest can evolve quite a lot before $180$ degrees of rotation are reached, it is expected that even with good motion models the reconstructions suffer from similar artefacts as in static limited angle tomography. Cardiac imaging with a single-source CT scanner provides an example of this case \cite{Ritman2003}. Problems with motion artifacts are typically overcome using {\it gating}, or imaging over several heartbeats and synchronizing the data with the help of a electrocardiogram. However, the proposed motion model approach paves the way towards dynamic tomography of one-shot events, such as the entry of contrast agent into bloodstream in angiography.

\item {\bf Small angular increments with multiple angles: } A modification of the previous setup is obtained if we consider small angular increments but with $k$ different projection angles at each time step with relative angles of $180/k$ degrees. This is a model for multi-source imaging systems, such as the classical ``dynamic spatial reconstructor'' with $k=23$ \cite{Robb1983}, or more recent dual-source or triple-source CT scanners \cite{Flohr2006,Zhao2010}. Another possibility is to consider the case of $\tau_R + \tau_P \ll \tau_M$ even for larger angles, but a dose constraint allowing only $k$ angles per time step.

\item {\bf Tracking: } Additionally, we consider a case with a possibly different number of angles measured per time step, taking the extreme case of tracking by starting with a full data set and then acquiring a single angle over several time steps until the next full data set is obtained. One motivation for this approach is again dose limitation. Another one may be processes with inherently different time scales in the dynamics, a fast part that only allows to take single angles with small increments interchanging with a slow part such that $\tau_M \gg \tau_R$. For example, consider studying fluid flow in porous media using synchrotron radiation \cite{Buchi2008}. Before introducing fluid, the sample is static ($\tau_M =\infty$) and can be accurately imaged. During fluid flow we have a fast period with $\tau_R \gg \tau_M$. When the voids in the sample are fully occupied by fluid, we again have $\tau_M =\infty$ and can take a final accurate scan.

\item {\bf Randomized angles: } If $\tau_R \ll \tau_M$ and one has dose limitations (or if $\tau_P \gg \tau_R$) one can instead consider a setup with arbitrarily different angles in each time step, which is expected to improve the reconstruction quality if we can obtain a better sampling of the full $180$ degrees in smaller time. Taking into account recent results on randomized measurements in compressed sensing (cf. \cite{jorgensen2015empirical,koesters2017}) we consider a setup of randomized angles. For simplicity we restrict ourselves to choosing a single angle in each time step from a uniform distribution (independent from the other time steps), which already yields strongly improved results. This measurement setup could be implemented using arrays of individually flashable small X-ray emitters \cite{Zhang2006}.
\end{itemize}

In order to perform computational experiments, we use well-designed software phantoms as well as a hardware phantom measured with a custom-built $\mu$CT system at the University of Helsinki, see \cite{Bubba2016} for technical specifications. As physical target we choose three small ($\sim$25 mm$^2$) square ceramic stones that are centered close to the detector. The geometry is approximately parallel beam with a focus-to-detector distance of 630 mm.
The collected data consists of 30 stop-and-go measurements with $60$ different angles acquired in each times step, which can be considered as a full CT and hence provides some reference reconstructions. All measurement setups discussed above can be obtained by choosing a subset of the collected angles.

The remainder of the paper is organized as follows: in Section \ref{sec:modelling} we introduce a time-dependent Radon transform and formulate the reconstruction procedure in a time continuous setting. For estimating the motion we discuss the optical flow constraint and combine both models to a joint problem for image reconstruction and motion estimation. Subsequently, we present a possibility to analyze the numerical error by writing the dynamic system as a state-space model and applying Bayesian inference. In Section \ref{sec:numerics} we discuss the discretization of our model as well as practical issues to solve the optimization problem. Results of the proposed method are then presented in Section \ref{sec:experiments} for a simulated and a physical phantom. We conclude this study with an outlook to future work in Section \ref{sec:conclusions}.

\section{Variational models for dynamic tomography with motion}\label{sec:modelling}

In order to introduce variational models for the reconstruction of dynamic X-ray tomography data, we start by shortly recapping some basic properties that are needed. At first we define a time-dependent version of the Radon transform, which is applicable to dynamic data sets and show its well-definedness. Subsequently, we concentrate on one of the most popular approaches for estimating motion, which is the optical flow methodology. In principle it is also possible to use different motion models, e.g. nonlinear models to consider large scale movements, or continuity equations in a 3D space. However, in this paper we focus on the 2D optical flow model. We combine the optical flow approach with the time-dependent Radon transform and hence obtain a model for motion corrected variational reconstruction. In the end of the chapter we furthermore perform an uncertainty quantification for a state-space formulation of the introduced model.

\subsection{X-ray tomography and Radon transform}\label{sec:RadonTrafo}
In dynamic X-ray tomography one seeks to determine a time-dependent function $u(x,t)$, where $u$ models the non-negative absorption of photons on a bounded domain $\Omega\subset \R^2$ at a time instance $t\in\R_+$, hence $u:\Omega\times\R_+\to\R_+$. We consider the measurement model
\begin{equation}\label{eqn:measMod}
\mathcal{A}u=m,
\end{equation}
where the operator $\mathcal{A}$ denotes a time-dependent two-dimensional Radon transform 
\begin{equation}\label{eqn:RadonTransform}
(\mathcal{A}u)(\theta,s,t) := (\mathcal{R}_{I(t)}u(x,t))(\theta,s)= \int_{x\cdot \theta = s}u(x,t)\,dx.
\end{equation}
Here, $I(t)$ indicates the set of given measurements at time $t$. 
The full Radon transform maps to a parametrization of the infinite unit cylinder denoted by $Z^2:=\{(\theta,s): \theta\in S^1, s\in \R\}$, where ${S}^1$ is the unit circle in $\R^2$.
For a fixed time $t\geq0$ the obtained measurement $m(\theta,s;t)=(\mathcal{R}_{I(t)}u(x,t))(\theta,s)$ is called the sinogram, which consists of line integrals over $\Omega$ with respect to the set $I(t)$. The attenuation $u(x,t)$ at each time instance can be uniquely determined if one has knowledge of the full sinogram for all possible lines, as shown by Radon \cite{Radon1917}, see also \cite{Natterer1986}. However, we are interested in situations of undersampling, such that rather the full collection $m$ corresponds to the usual sinogram. This is apparent if there is a single angle, i.e. a unique $\theta=\hat \theta(t)$ for each $t$. Then the measurement is actually $\tilde m(s;t) = (\mathcal{R}_{I(t)}u(x,t))(\hat \theta(t),s)$

In order to give a sound definition of the time dependent Radon transform we introduce some additional notation and assumptions. In the following we assume that the time interval is fixed and bounded by the end point $T>0$. Furthermore, we assume that the measured angles might change between time steps and denote $I(t)\subset Z^2$ as the set of active measurement parameters in each time instance. We equip the set of measurement parameters $I(t)$ measured at time $t$ with a nonnegative Radon measure $\sigma_t$, noticing that in the undersampling situations we are interested in $\sigma_t$ will be a partially discrete measure with respect to $\theta$ for each $t$. We denote by $\Theta(t)$ the set of all $\theta$ appearing in $I(t)$.
 Then, with $\Omega$ denoting the bounded support of $u$ we consider for $I(t) \subset Z^2$ the operator 
\begin{equation} \label{eq:RIdefinition}
\mathcal{R}_{I(t)}: L^p(\Omega) \rightarrow L^p_{\sigma_t}(I(t)),\  v \mapsto (\int_{x\cdot \theta = s}v(x)\,dx)_{(s,t) \in I(t)}.	
\end{equation}
In order to deal appropriately with the undersampling we define 
\begin{equation}
	\Omega(t) = \{x \in \Omega~|~ \exists (\theta,s) \in I(t): s= x\cdot \theta~\}
\end{equation}
and assume that there exists a nonnegative Radon measure $\eta_t$ on $\Theta(t)$ and a bounded (uniformly in time) measurable function $\rho_t$ supported on $\Omega(t)$ such that
\begin{equation}
	\int_{I(t)} \int_{\{x \cdot \theta = s\}\cap \Omega} \psi(x,s,\theta) ~dx ~d\sigma_t= \int_{\Theta(t)} \int_\Omega	\psi(x,\theta\cdot x, \theta) \rho_t(x)~dx~d\eta_t
\end{equation}
for all integrable functions $\psi$. It is straight-forward to construct $\eta_t$ from $\sigma_t$ in the measurement scenarios outlined above. 
 
\begin{lemma} \label{RIlemma}
Let 
$$L=\sup_{(\theta,s) \in Z^2} \int_{\{x\cdot \theta = s\} \cap \Omega}\,dx < \infty.$$ Then $\mathcal{R}_{I(t)}$ is well-defined by \eqref{eq:RIdefinition} and a bounded linear operator for $p \in [1,\infty)$. The dual operator $\mathcal{R}_{I(t)}^* : L^{p_*}_{\sigma_t}(I(t)) \rightarrow L^{p_*}(\Omega) $ is given by
$$
\mathcal{R}_{I(t)}^* \varphi = \rho_t(x) ~\int_{\Theta(t)} \varphi(\theta,x\cdot \theta)~d\eta_t
$$ 
\end{lemma}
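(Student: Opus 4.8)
The plan is to establish the three assertions in the natural order: boundedness (which will simultaneously yield well-definedness), followed by the formula for the adjoint via a direct pairing computation. The entire argument is organized around reducing every integral to the structural change-of-variables identity, which converts integration over $I(t)$ against $\sigma_t$ into integration over $\Theta(t)\times\Omega$ against $\eta_t$ and $\rho_t\,dx$; here $p_*$ denotes the conjugate exponent with $1/p+1/p_*=1$.

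First I would estimate $\|\mathcal{R}_{I(t)}v\|_{L^p_{\sigma_t}}$ directly. For a fixed line $\{x\cdot\theta=s\}$, the triangle inequality followed by Hölder's inequality against the constant $1$ on the chord $\{x\cdot\theta=s\}\cap\Omega$ gives $\left|\int v\,dx\right|^p \le \left(\int|v|\,dx\right)^p \le L^{p/p_*}\int|v|^p\,dx$, where the chord-length bound $L$ enters precisely through the hypothesis $L<\infty$ (and $L^{p/p_*}=1$ is read off correctly when $p=1$). Integrating this pointwise bound over $I(t)$ with respect to $\sigma_t$ and then applying the change-of-variables identity with $\psi(x,s,\theta)=|v(x)|^p$ turns the right-hand side into $L^{p/p_*}\int_{\Theta(t)}\int_\Omega |v|^p\rho_t\,dx\,d\eta_t$. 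Since $\rho_t$ is bounded uniformly in $t$ and $\eta_t(\Theta(t))<\infty$ (being a Radon measure on the bounded set $\Theta(t)\subseteq S^1$), this is bounded by $C\|v\|_{L^p(\Omega)}^p$ with $C$ independent of $t$, which is the operator-norm bound. Running the same computation with $\psi=|v|$ shows the chordwise integrals are finite for $\sigma_t$-a.e. $(\theta,s)$, so $\mathcal{R}_{I(t)}v$ is a genuine element of $L^p_{\sigma_t}(I(t))$ even though $v$ is only an $L^p$-equivalence class; this is exactly where the disintegration assumption does the essential work of rendering the restriction to measure-zero lines meaningful. For the adjoint I would start from $\langle \mathcal{R}_{I(t)}v,\varphi\rangle = \int_{I(t)}\left(\int_{\{x\cdot\theta=s\}\cap\Omega}v\,dx\right)\varphi\,d\sigma_t$ for $v\in L^p$ and $\varphi\in L^{p_*}_{\sigma_t}$, write it as a double integral with $\psi(x,s,\theta)=v(x)\varphi(\theta,s)$, apply the change-of-variables identity to obtain $\int_{\Theta(t)}\int_\Omega v(x)\varphi(\theta,\theta\cdot x)\rho_t(x)\,dx\,d\eta_t$, and then a Fubini interchange exhibits this as $\int_\Omega v(x)\left[\rho_t(x)\int_{\Theta(t)}\varphi(\theta,x\cdot\theta)\,d\eta_t\right]dx$; by definition of the adjoint the bracketed quantity is $\mathcal{R}_{I(t)}^*\varphi$, matching the claimed formula, and its boundedness into $L^{p_*}(\Omega)$ is automatic from that of $\mathcal{R}_{I(t)}$.

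The inequalities and the final reading-off of the adjoint are mechanical; the genuine content sits entirely in the two invocations of the change-of-variables identity, and the main obstacle is justifying them rigorously — namely, verifying that the chordwise line integrals exist for $\sigma_t$-almost every parameter and that the Fubini interchange is licensed. Both reduce to integrability of $|\psi|$, which the boundedness estimate of the preceding paragraph already supplies, so once $L<\infty$ and the uniform bound on $\rho_t$ are in hand there is no further analytic difficulty. The care required is purely in confirming that the structural disintegration hypothesis legitimately replaces the otherwise ill-defined restriction of an $L^p$ function to a line, rather than in any delicate estimation.
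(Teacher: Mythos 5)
Your proposal is correct, and its analytic core coincides with the paper's: the same H\"older/Jensen estimate on each chord using the length bound $L$, the same invocation of the disintegration identity to pass from $\sigma_t$-integration over $I(t)$ to $\eta_t \otimes \rho_t\,dx$ integration over $\Theta(t)\times\Omega$, and the same Fubini computation of the pairing $\langle \varphi, \mathcal{R}_{I(t)}v\rangle$ to read off the adjoint. (Incidentally, your constant $L^{p/p_*}=L^{p-1}$ is the correct exponent; the paper's $L^{p_*}$ is a slip.) The one genuine divergence is how well-definedness is handled. The paper defines $\mathcal{R}_{I(t)}$ on the dense subspace $C(\overline{\Omega})$ -- where restriction to a line is unambiguous -- proves the bound there, and extends uniquely to $L^p(\Omega)$ by density; this sidesteps all measure-theoretic subtleties about restricting equivalence classes to null sets, at the cost of defining the operator on general $L^p$ functions only abstractly, as a limit. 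You instead work directly on $L^p(\Omega)$, arguing via the disintegration identity (applied to $\psi=|v|$) that the chordwise integrals are finite $\sigma_t$-a.e.; this yields the concrete formula on all of $L^p$ without an extension step, but to be fully rigorous it needs one more ingredient you gesture at but do not spell out: representative-independence, i.e.\ applying the identity to $\psi=\chi_N$ for a Lebesgue-null set $N\subset\Omega$ to conclude that $\sigma_t$-a.e.\ line meets $N$ in a set of one-dimensional measure zero, so that the line integrals do not depend on the chosen representative of $v$. With that observation added, your direct route is complete and arguably gives slightly more information than the density argument.
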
 
\begin{proof}
First of all it is apparent that $\mathcal{R}_{I(t)}$ is well-defined on the dense subspace $C(\overline{\Omega})$. For continuous $v$ we have 
\begin{eqnarray*}\int_{I(t)} \left\vert \int_{x\cdot \theta = s}v(x)\,dx \right\vert^p ~d\sigma_t &\leq& L^{p_*} 
\int_{I(t)}  \int_{x\cdot \theta = s} \left\vert v(x) \right\vert^p \,dx~d\sigma_t \\
&=&  L^{p_*}  \int_{\Theta(t)} \int_\Omega	\left\vert v(x) \right\vert^p \rho_t(x)~dx~d\eta_t
 \\ &\leq&  L^{p_*}  \int_{\Theta(t)} ~d\eta_t \vert \rho_t\vert_\infty \vert v \vert_p^p.
 \end{eqnarray*}
Thus, $\mathcal{R}_{I(t)}$ is a bounded linear operator defined on a dense subspace and can be extended uniquely to a bounded linear operator on $L^p(\Omega)$.
Now consider the duality product
$$ \langle \varphi, \mathcal{R}_{I(t)}v \rangle  = \int_{I(t)}  \int_{x\cdot \theta = s} v(x) \varphi(\theta,s)\,dx~d\sigma_t = \int_{\Theta(t)} \int_\Omega v(x) \varphi(\theta,x\cdot \theta)~\rho_t(x)~dx~d\eta_t, $$
then an application of Fubini's theorem yields the above form of the dual operator.
\end{proof}

Then it follows directly that the operator $\mathcal{A}:L^p(\Omega\times [0,T])\rightarrow L^p(\cup_{t\in [0,T]} I(t)\times \{t\})$ is linear and bounded: 
\begin{proposition}
Let $[0,T]$ be a fixed and bounded time interval and $\Omega\subset\R^2$ bounded and let the assumptions of Lemma \ref{RIlemma} holds. Then for $p \in [1,\infty)$ the time-dependent Radon transform $\mathcal{A}:L^p(\Omega\times [0,T])\rightarrow L^p(\cup_{t\in [0,T]} I(t)\times \{t\})$, as defined in \eqref{eqn:RadonTransform}, is a well-defined bounded linear operator with dual operator
$$ (\mathcal{A}^* \varphi) (x,t) = (\mathcal{R}_{I(t)}^* \varphi(\cdot,t) ) (x).$$
\end{proposition}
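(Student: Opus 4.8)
The plan is to lift the per-time-slice statement of Lemma~\ref{RIlemma} to the space-time setting by integrating in $t$ and exploiting the fact that the constants in Lemma~\ref{RIlemma} are uniform in time. Linearity of $\mathcal{A}$ is immediate, since $(\mathcal{A}u)(\cdot,\cdot,t) = \mathcal{R}_{I(t)}u(\cdot,t)$ and each $\mathcal{R}_{I(t)}$ is linear. I would equip the target space $L^p(\cup_{t\in[0,T]} I(t)\times\{t\})$ with the natural fibered norm whose disintegration along $t$ is $\sigma_t$, i.e.
$$ \|w\|^p = \int_0^T \int_{I(t)} |w(\theta,s,t)|^p \, d\sigma_t\, dt. $$

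For boundedness, Tonelli's theorem gives $\|\mathcal{A}u\|^p = \int_0^T \|\mathcal{R}_{I(t)}u(\cdot,t)\|^p_{L^p_{\sigma_t}(I(t))}\, dt$, and applying the estimate from the proof of Lemma~\ref{RIlemma} slice by slice yields
$$ \|\mathcal{R}_{I(t)}u(\cdot,t)\|^p_{L^p_{\sigma_t}(I(t))} \leq L^{p_*}\, \eta_t(\Theta(t))\, |\rho_t|_\infty\, \|u(\cdot,t)\|^p_{L^p(\Omega)}. $$
The crucial point is that $L$ is independent of $t$, that $\rho_t$ is bounded uniformly in $t$ by assumption, and that in the sampling scenarios considered the total mass $\eta_t(\Theta(t))$ is uniformly bounded; hence the prefactor is dominated by a constant $C^p$ independent of $t$. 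Integrating in $t$ and using $\|u\|^p_{L^p(\Omega\times[0,T])} = \int_0^T \|u(\cdot,t)\|^p_{L^p(\Omega)}\, dt$ gives $\|\mathcal{A}u\| \leq C\|u\|$.

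For the dual operator I would test against $\varphi \in L^{p_*}(\cup_{t} I(t)\times\{t\})$ and compute
$$ \langle \varphi, \mathcal{A}u\rangle = \int_0^T \langle \varphi(\cdot,t), \mathcal{R}_{I(t)}u(\cdot,t)\rangle\, dt = \int_0^T \langle \mathcal{R}_{I(t)}^*\varphi(\cdot,t), u(\cdot,t)\rangle\, dt, $$
where the second equality is the dual identity of Lemma~\ref{RIlemma} applied for each fixed $t$. Recognising the right-hand side as $\langle \mathcal{A}^*\varphi, u\rangle$ with $(\mathcal{A}^*\varphi)(x,t) = (\mathcal{R}_{I(t)}^*\varphi(\cdot,t))(x)$ yields the claimed formula; the interchange of the order of integration is justified by the boundedness just established together with Fubini's theorem.

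The \textbf{main obstacle} I anticipate is not the norm estimate, which is a routine consequence of Lemma~\ref{RIlemma}, but the measurability and well-definedness of the fibered objects. One must make precise that $(\sigma_t)_{t\in[0,T]}$ is a measurable family of measures (a disintegration of the space-time measure on $\cup_t I(t)\times\{t\}$), and verify that $(\theta,s,t)\mapsto (\mathcal{R}_{I(t)}u(\cdot,t))(\theta,s)$ is jointly measurable so that the iterated integrals above are meaningful and Tonelli/Fubini apply. I would handle joint measurability by first establishing it on the dense subspace $C(\overline{\Omega}\times[0,T])$, where continuity renders the fibered map transparent, and then extending by density using the uniform-in-time bound, exactly mirroring the density argument employed in the proof of Lemma~\ref{RIlemma}; one should also record that the uniform-in-time hypotheses on $\rho_t$ and $\eta_t$ do hold in the concrete measurement setups, as asserted after \eqref{eq:RIdefinition}.
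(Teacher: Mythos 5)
Your proposal is correct and takes essentially the same route as the paper, which offers no detailed argument at all: it simply asserts that the proposition ``follows directly'' from Lemma~\ref{RIlemma}, meaning exactly the slice-wise application of the lemma at each fixed $t$ followed by integration over $[0,T]$ that you carry out. Your added caveats --- uniform-in-time control of the prefactor $L^{p_*}\,\eta_t(\Theta(t))\,|\rho_t|_\infty$ and joint measurability of the fibered family $(\sigma_t)_{t\in[0,T]}$ --- are precisely the hypotheses the paper leaves implicit (it assumes $\rho_t$ uniformly bounded and that $\eta_t$ is constructible in the stated measurement scenarios), so spelling them out strengthens rather than departs from the paper's argument.
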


Considering the inverse problem, since  we cannot measure the full sinogram in real life applications,  uniqueness of the solution $u$ in \eqref{eqn:measMod} is not guaranteed. Furthermore, the measurement $m$ is typically contaminated with noise and we need additional regularization to obtain a stable reconstruction. A well established approach is to search for a minimizer of a regularization functional, such as
\begin{equation}\label{eqn:RadonRecFunc}
J_{\text{rec}}(u):=\frac{1}{p}\|\mathcal{R}_{I(t)}u-m(t)\|_{p}^p + \alpha |u|_{BV},
\end{equation}
where $\alpha$ is a regularization parameter balancing the two parts. The first term in \eqref{eqn:RadonRecFunc} is the data fidelity term, which enforces that the sought-for attenuation function is close to the obtained measurement. The second term is the regularization term enforcing certain features of the reconstruction. In particular, we are interested in a sparse reconstruction with constant areas that are divided by sharp edges. For this purpose the so-called total variation is a common approach. In this study we consider the two choices $p\in\{1,2\}$. For $p=2$ we have the classical and well-studied $L^2$-$TV$ model used for tomographic imaging by  \cite{Herman2008,Jensen2011,Kolehmainen2006,Sidky2008,Tang2009,Tian2011} and many more, in contrary the $L^1$-$TV$ model is typically not used for X-ray tomography but tends to reduce streaking artifacts for undersampled data \cite{Sidky2012}. With \eqref{eqn:RadonRecFunc}, reminiscent of the well-known ROF-model \cite{Rudin1992}, we would obtain separate image reconstructions at different time steps, which is attractive from a computational point of view, but seems only applicable for a reasonable amount of measurements. In this study we will supplement the model by an appropriate time correlation of the image sequence $u$ and evaluate the performance of both models for extremely undersampled measurement data.

\subsection{Motion models}

Optical flow is one of the most common methods to estimate motion between consecutive images. Its performance is based on the assumption of brightness constancy, i.e. every pixel keeps its intensity over time even if it moves to another position within the image. Assuming a constant image intensity $u(x,t)$ along a trajectory $x(t)$ with $\frac{dx}{dt}=\boldsymbol{v}(x,t)$, we obtain
\begin{align}
	0 = \frac{du}{dt} =  \frac{\partial u}{\partial t} + \sum_{i=1}^n \frac{\partial u}{\partial x_i}\frac{dx_i}{dt} = u_t + \nabla u\cdot\boldsymbol{v}.
	\label{equation:opticalFlowConstraint}
\end{align}
The last equation is generally known as the \textbf{optical flow constraint} and $\boldsymbol{v}=(v^1,v^2)^T$ is the desired vector field. In spatial dimension two, \eqref{equation:opticalFlowConstraint} only states one equation per point for the two unknown components of $\boldsymbol{v}$ and, consequently, the problem is underdetermined. To overcome this, the optical flow formulation can be used as a data fidelity in a variational model together with an isotropic total variation term on each of the two flow components to ensure spatial regularity, i.e.
\begin{align}
J_{\text{flow}}(u,\boldsymbol{v})=\|u_t + \nabla u\cdot \boldsymbol{v}\|_1 + \beta\ |\boldsymbol{v}|_{BV}.
\label{equation:tvlonemodel}
\end{align}
In this model, the parameter $\beta>0$ regulates between both parts. In the optical flow setting the $L^1$ norm has been proven to be more robust with respect to outliers \cite{aubert1999computing}, which is an important characteristic especially in combination with noisy data from real applications. This model is nowadays one of the most popular models for optical flow, since it has shown success while tracking constant moving objects over time, see e.g. \cite{Zach2007}. The total variation regularization usually causes piecewise constant vector fields, which allow to distinguish a moving object from the background.

We mention that various modifications can be incorporated into our approach in a straightforward way. For out of plan motion it may be necessary to include additional source and sink terms to obtain
$$ u_t + \nabla u\cdot\boldsymbol{v} = S ,$$
with $S$ becoming another optimization model, ideally regularized by a sparsity prior in the variational model. For three-dimensional tomography reconstruction it is completely natural to replace the optical flow constraint by the continuity equation
$$ u_t + \nabla \cdot( u \boldsymbol{v} ) = 0. $$

\subsection{Motion corrected variational reconstruction }

From our point of view the problem of motion estimation is directly connected to the tomographic reconstruction problem because it requires accurate input images. In many motion estimation applications one first reconstructs the image sequence using \eqref{eqn:RadonRecFunc} and afterwards estimates the underlying vector fields using \eqref{equation:tvlonemodel}. In \cite{Dir15} it has been shown that a joint model that simultaneously recovers an image sequence and estimates motion offers a significant advantage towards subsequently applying both methods. In \cite{Frerking2016} the model proposed in \cite{Dir15} was applied to dynamic X-ray tomography, resulting in the following joint model:
\begin{align}\begin{split}
	\argmin_{u,\boldsymbol{v}}& \int_0^T \left( \frac{1}{p}\left\|(\mathcal{A}u)(\cdot,t)-m(\cdot,t) \right\|_p^p + \alpha |u(\cdot,t)|_{BV}^q + \beta | \boldsymbol{v}(\cdot,t)|_{BV}^r \right) dt,\\
	 \text{s.t. } & u_t+\nabla u\cdot\boldsymbol{v} = 0\end{split}
	\label{oldJointModel}
\end{align}
for $p\in\{1,2\}$ and $q,r \geq 1$. For both image sequence and vector field, the respective total variation is used as a regularizer and the classical optical flow formulation from \eqref{equation:opticalFlowConstraint} connects image sequence and vector field. From the perspective of image reconstruction the optical flow constraint acts as an additional temporal regularizer along the calculated motion field $\boldsymbol{v}$.

Appropriate weak-star compactness of sublevel sets and lower semicontinuity can be deduced from the arguments in \cite{burger2016variational}, where the minimization was carried out over the set 
\begin{equation}
	\mathcal{D}:= \{(u,\boldsymbol{v}) \in L^{\min{p,q}}(0,T;BV(\Omega)) \times L^r(0,T;BV(\Omega)~|~\| \boldsymbol{v}\|_{\infty} \le c_v < \infty, \vert \nabla \cdot \boldsymbol{v} \vert_{\mathcal{E}} \leq c_d ~\},
\end{equation}
where $\mathcal{E}$ is a Banach space continuously embedded into $L^m(0,T;L^k(\Omega))$,
with $m > q^*$ and $k >p$. Noticing that $BV(\Omega)$ is continuously embedded into $L^p(\Omega)$ for $p\leq 2$ in two spatial dimensions, the arguments of \cite{burger2016variational} can be directly applied for $p=2$ and the ones in \cite{Frerking2016} provide a similar proof for the case $p=1$. Thus, we obtain the following existence result for \eqref{oldJointModel}:
\begin{theorem}{(Existence of Minimizers)}\\
For $p\in\{1,2\}$, let $1<q,r$ and
\begin{align*}
J_{joint}(u,\boldsymbol{v}) =  \int_0^T \left( \frac{1}{p} \| (\mathcal{A}u)(\cdot,t) - m(t) \|_p^p +   \alpha | u(\cdot,t) |^q_{BV} + \beta| \boldsymbol{v}(\cdot,t) |^r_{BV} \right)\, dt.
\end{align*}
Furthermore, let $ \mathcal{R}_{I(t)}\boldsymbol{1} \neq 0$ for all $t \in [0,T]$.
Then, there exists a minimizer in the constraint set
\begin{align*}
\mathcal{S} = \left\lbrace (u,\boldsymbol{v}) \in \mathcal{D} \big|   \boldsymbol{v}\cdot\nabla u + \partial_t u = 0 \right\rbrace.
\end{align*}
\end{theorem}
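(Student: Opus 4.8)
The plan is to apply the direct method of the calculus of variations, reducing the bulk of the argument to the compactness and lower semicontinuity results of \cite{burger2016variational} and verifying only that the time-dependent forward operator $\mathcal{A}$ does not interfere. First I would fix a minimizing sequence $(u_n,\boldsymbol{v}_n) \in \mathcal{S}$ with $J_{joint}(u_n,\boldsymbol{v}_n) \to \inf_{\mathcal{S}} J_{joint}$, and assume the functional values are uniformly bounded. The first substantive step is coercivity. The regularization terms bound $|u_n(\cdot,t)|_{BV}$ and $|\boldsymbol{v}_n(\cdot,t)|_{BV}$ in the respective Bochner norms, while membership in $\mathcal{D}$ already supplies the uniform bounds $\|\boldsymbol{v}_n\|_\infty \le c_v$ and $|\nabla\cdot\boldsymbol{v}_n|_{\mathcal{E}} \le c_d$. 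Since the total variation controls $u_n$ only up to its spatial mean $\bar u_n(t)$, I would use the hypothesis $\mathcal{R}_{I(t)}\boldsymbol{1}\neq 0$ to pin down this mean through the fidelity term: splitting $u_n = (u_n - \bar u_n) + \bar u_n \boldsymbol{1}$, controlling $\|\mathcal{R}_{I(t)}(u_n - \bar u_n)\|_p$ by $|u_n|_{BV}$ via boundedness of $\mathcal{R}_{I(t)}$ from Lemma \ref{RIlemma} and the Poincar\'e--Wirtinger inequality, a reverse triangle inequality forces a bound on $|\bar u_n(t)|\,\|\mathcal{R}_{I(t)}\boldsymbol{1}\|_p$. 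This upgrades to a full bound on $u_n$ in $L^{\min\{p,q\}}(0,T;BV(\Omega))$.

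The second step is compactness. From the uniform bounds I would extract non-relabelled subsequences converging weak-$\ast$: $u_n$ in $L^{\min\{p,q\}}(0,T;BV(\Omega))$, $\boldsymbol{v}_n$ in $L^r(0,T;BV(\Omega))$ and in $L^\infty$, and $\nabla\cdot\boldsymbol{v}_n$ in $\mathcal{E}$. The essential gain is \emph{strong} convergence of $u_n$ in a space-time Lebesgue norm: the spatial $BV$-bound gives compactness in space, while the optical-flow constraint rewritten in divergence form, $\partial_t u_n = -\nabla\cdot(u_n\boldsymbol{v}_n) + u_n\,\nabla\cdot\boldsymbol{v}_n$, yields a uniform bound on $\partial_t u_n$ in a negative-order space. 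An Aubin--Lions type argument then gives $u_n \to u$ strongly in $L^k(0,T;L^{p}(\Omega))$, which is precisely the purpose of the embedding assumptions $k>p$ and $m>q^*$ built into $\mathcal{D}$.

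I would then verify lower semicontinuity and closedness of the constraint. Because $\mathcal{A}$ is a bounded linear operator (the Proposition), the map $u\mapsto \int_0^T \tfrac1p\|(\mathcal{A}u)(\cdot,t)-m(t)\|_p^p\,dt$ is convex and strongly continuous, hence weakly lower semicontinuous; the $BV$-seminorm terms are weak-$\ast$ lower semicontinuous, and the exponents $q,r>1$ keep the time integrals lower semicontinuous under weak convergence. The constraint passes to the limit in the divergence-form identity: the products $u_n\boldsymbol{v}_n$ and $u_n\,\nabla\cdot\boldsymbol{v}_n$ converge distributionally since $u_n$ converges strongly while $\boldsymbol{v}_n$ and $\nabla\cdot\boldsymbol{v}_n$ converge weakly, so the limit satisfies $\boldsymbol{v}\cdot\nabla u + \partial_t u = 0$ and $(u,\boldsymbol{v}) \in \mathcal{S}$. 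Combining lower semicontinuity with $J_{joint}(u,\boldsymbol{v}) \le \liminf_n J_{joint}(u_n,\boldsymbol{v}_n) = \inf_{\mathcal{S}} J_{joint}$ identifies the limit as a minimizer.

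The main obstacle is the passage to the limit in the nonlinear constraint, which hinges entirely on upgrading weak convergence of $u_n$ to strong convergence. The $L^\infty$ and bounded-divergence restrictions in $\mathcal{D}$, and the fine-tuned embedding exponents for $\mathcal{E}$, exist exactly to make this Aubin--Lions compactness and the resulting product-limit argument work; everything else is routine direct-method bookkeeping. For $p=2$ one invokes \cite{burger2016variational} almost verbatim, using $BV(\Omega) \hookrightarrow L^2(\Omega)$ in two dimensions, and for $p=1$ the parallel estimates of \cite{Frerking2016}. The only genuinely new point is to confirm that the time-dependence of $\mathcal{A}$ and $I(t)$ enters solely through the fidelity term, whose continuity and convexity are unaffected.
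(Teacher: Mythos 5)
Your proposal is correct and takes essentially the same route as the paper: the paper's proof consists precisely of invoking the direct-method arguments of \cite{burger2016variational} (for $p=2$) and \cite{Frerking2016} (for $p=1$) over the constraint set $\mathcal{D}$, i.e.\ exactly the coercivity, weak-$\ast$ compactness, Aubin--Lions strong convergence, and weak-times-strong product-limit machinery you reconstruct, with the hypothesis $\mathcal{R}_{I(t)}\boldsymbol{1}\neq 0$ playing the mean-fixing coercivity role you identify and the time-dependence of $\mathcal{A}$ entering only through the fidelity term. Your write-up is in fact more explicit than the paper's, which delegates these details to the cited references.
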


We mention that the choice $q,r > 1$ has to be made in the analysis to avoid to deal with measures in time, in computational scenarios below it is however more efficient to set $q=r=1$.

\subsection{Uncertainty quantification for a probabilistic state-space model}

Estimating numerical errors in the problem \eqref{oldJointModel} is challenging due to the inherent nonlinearity of the problem. Below we provide a probabilistic perspective to error modeling by writing our dynamic system as a state-space model and applying Bayesian inference \cite{law2015data}. Here, we assume a time-space discretization which is specified later. Our state variable is $u^i$ at time-step $i$ and the flow field $\boldsymbol{v}^{i}$ corresponds to a latent variable, although it is naturally of equal interest. 
For convenience, let us write $\conu^k = (u^j)_{j=1}^k$ and $\conv^k = (\boldsymbol{v}^j)_{j=1}^k$ for the concatenation of the times series of state vectors until time $k$. Similarly, we write $\conf^k = (m^j)_{j=1}^k$. Notice that the flow field $\boldsymbol{v}$ is not estimated at the last time step and therefore $\conu^n$ and $\conv^{n-1}$ represent the full time series. Below, $\Pro(w)$ stands for the probability density function related to random variable $w$.

{\bf Prior.} Suppose our prior information regarding the initial state is given by the formal probability density
\begin{equation*}
	\Pro(u^1) \propto \exp\left(-\alpha \mathcal{G}_1(u^1)\right),
\end{equation*}
where $\mathcal{G}_1$ is the energy functional, e.g. the BV norm in Section \ref{sec:numerics}.
The equations governing optical flow yield the evolution model, which is given by
\begin{equation*}
	u^{i+1} = \mathcal{H}(u^i, \boldsymbol{v}^{i}) + \xi,
\end{equation*}
where $\mathcal{H}$ corresponds a suitable discretization of \eqref{equation:opticalFlowConstraint} specified later and the noise $\xi$ is distributed according to density $\Pro(\xi) \propto \exp(-\left\|\xi\right\|_1)$. Clearly, the conditional probability distribution of $u^{i+1}$ given both $u^i$ and $\boldsymbol{v}^{i}$ can be expressed by
\begin{equation*}
	\Pro(u^{i+1}	| u^i, \boldsymbol{v}^{i}) \propto
	\exp\left(-\gamma \left\|u^{i+1}-\mathcal{H}(u^i,\boldsymbol{v}^i)\right\|_1\right).
\end{equation*}
Here, we make the crucial assumption that $u^{i+1}$ and $\boldsymbol{v}^{i}$ is {\it a priori} {\it independent $\boldsymbol{v}^{i-1}$}. Notice carefully that this is not the case {\it a posteriori}. 
Moreover, we assume $\boldsymbol{v}^{i}$ is {\it a priori} independent of $u^i$ and therefore
\begin{equation*}
	\Pro(u^{i+1}, \boldsymbol{v}^{i}	| u^i)  = \Pro(u^{i+1}	| u^i, \boldsymbol{v}^{i}) \Pro(\boldsymbol{v}^{i}),
\end{equation*}
Now by assuming $\Pro(\boldsymbol{v}^{i}) \propto \exp(- \beta \mathcal{G}_2(\boldsymbol{v}^i))$ for some energy functional $\mathcal{G}_2$, it follows that
\begin{equation*}
	\Pro(u^{i+1}, \boldsymbol{v}^{i}	| u^i) \propto
	\exp\left(-\gamma \left\|u^{i+1}-\mathcal{H}(u^i,\boldsymbol{v}^i)\right\|_1  - \beta \mathcal{G}_2(\boldsymbol{v}^i)\right)
\end{equation*}
and the full prior model can be expressed recursively as
\begin{eqnarray*}
	\Pro(\conu^n, \conv^{n-1})
	& = & \Pro(u^n, \boldsymbol{v}^{n-1} | \conu^{n-1}, \conv^{n-2}) 
	\Pro(\conu^{n-1}, \conv^{n-2}) \\
	& = & \Pro(u^n, \boldsymbol{v}^{n-1} | u^{n-1})\Pro(\conu^{n-1}, \conv^{n-2}) \\
	& = & \prod_{i=1}^{n-1} \Pro(u^{i+1}, \boldsymbol{v}^{i} | u^{i}) \cdot \Pro(u^{1}).
\end{eqnarray*}

{\bf Likelihood.} Our observation of the system state is obtained via 
\begin{equation*}
	m^i = A^i u^i + \epsilon^i,
\end{equation*}
where $\epsilon^i$, $i=1,...,n$ are i.i.d. and $\Pro(\epsilon^i) \propto \exp(-\frac 1p \left\|\epsilon^i\right\|^p_p)$ is a random noise vector. Moreover, we assume virtual zero-observations at time steps $2,...,n$, i.e., 
we observe
\begin{equation}
	\label{eq:virtual_observations}
	g^i = u^i - \delta^i,
\end{equation}
where $\{\delta^i\}_{i=2}^n$ are i.i.d., $\Pro(\delta^i) \propto \exp(-\alpha \mathcal{G}_1(\delta^i))$ and assume $g^i = 0$ for all $i\geq 2$.
Notice that the virtual observations are not necessary for the probabilistic system to be well-defined. However, observations in \eqref{eq:virtual_observations} state that for the likely values of $u^i$ the quantity $\mathcal{G}_1(u^i)$ is small and, therefore, impose some additional regularity to the system.

Under these observations it follows that the likelihood density is of the form
\begin{equation*}
	\Pro(\conf^n \; |\; \conu^n) \propto \exp\left( -\sum_{i=1}^{n} \frac{1}{p}\left\|A^iu^i-m^i\right\|_p^p - \sum_{i=2}^n \alpha \mathcal{G}_1(u^i) \right).
\end{equation*}

{\bf Uncertainty quantification.} In this work we consider reconstruction methods based on {\it smoothing} \cite{law2015data}, i.e., we estimate all states simultaneously based on the full time-series data. In this case, the posterior distribution obtained from the state-space model is proportional to the product of the prior and likelihood
densities
\begin{equation}
	\label{eq:posterior}
	\Pro(\conu^n, \conv^{n-1} \;  | \; \conf^n) \propto
	\Pro(\conf^n | \conu^n) \Pro(\conu^n, \conv^{n-1}) 
	= \exp\left(-J(\conu^n, \conv^{n-1}; \conf^n)\right),
\end{equation}
where $J$ is the functional given by
\begin{multline}
	J(\conu^n, \conv^n; \conf^n) \\
	= \sum_{i=1}^{n}\left\{ \frac{1}{p}\left\|A^iu^i-m^i\right\|_p^p + \alpha \mathcal{G}_1(u^i)\right\} + \sum_{i=1}^{n-1}\left\{\gamma \left\|u^{i+1} - \mathcal{H}(u^i, \boldsymbol{v}^i)\right\|_1 + \beta \mathcal{G}_2(\boldsymbol{v}^i)\right\}
	\label{oldJointModelTimeDiscrete},
\end{multline}
Our numerical work below on estimating the maximum point of the posterior distribution, i.e. minimizer of $J$, corresponds to the {\it weak constraint 4DVAR} method \cite{apte2008data}.
Sampling the full posterior distribution requires high computational effort and is not explored in this paper.

Notice that the state-space model also enables {\it filtering} methods, where one is concerned by the online estimation of $\Pro(u^{i+1},\boldsymbol{v}^i | \conf^{i+1})$ as the observational data is accumulated.  
The update from $\Pro(u^{i},\boldsymbol{v}^{i-1} | \conf^{i})$ to $\Pro(u^{i+1},\boldsymbol{v}^i | \conf^{i+1})$ is obtained via a prediction step
\begin{equation*}
	\Pro(u^{i},\boldsymbol{v}^{i-1} | \conf^{i}) \mapsto \Pro(u^{i+1},\boldsymbol{v}^i | \conf^{i})
	= \int \int \Pro(u^{i+1},\boldsymbol{v}^i | u^{i},\boldsymbol{v}^{i-1})
	\Pro(u^{i},\boldsymbol{v}^{i-1} | \conf^{i}) du^i d\boldsymbol{v}^{i-1}
\end{equation*}
and an analysis step
\begin{equation*}
	\Pro(u^{i+1},\boldsymbol{v}^i | \conf^{i}) \mapsto
	\Pro(u^{i+1},\boldsymbol{v}^i | \conf^{i+1}) = 
	\frac{\Pro(m^{i+1} | u^{i+1}, \boldsymbol{v}^i) \Pro(u^{i+1}, \boldsymbol{v}^i | \conf^{i})}{\Pro(m^{i+1} | \conf^{i})}.
\end{equation*}
These aspects of the stochastic dynamics in optical flow models are studied further in subsequent work.

\section{Numerical implementation}\label{sec:numerics}

To handle the numerical implementation of the joint model for motion corrected reconstruction, we first need to formulate a discrete version. Here we especially focus on the structure of the time-dependent Radon operator respectively the matrix representing its discretization, in order to obtain efficient schemes. Afterwards, we split the variational model into two subproblems and introduce an alternating approach to solve it. In order to minimize the single subproblems, we employ iteration schemes based on established primal-dual methods.

\subsection{Discretization}
In practical applications we cannot measure infinitely many line integrals continuously in time and hence we need to assume a discretization in space and time to model the measurement process properly. We aim at representing the inverse problem of recovering the attenuation as a simple matrix-vector equation 
\begin{equation}\label{eqn:RadonLinearMatrixVec}
A\vecu=\vecm,
\end{equation}
with the matrix $A$ representing the discretized Radon transform dependent on time, $\vecu$ being the attenuation coefficient in each pixel. The measurement $\vecm$ is taken during a fixed time period $[0,T]$ as discussed in Section \ref{sec:RadonTrafo}, and we only measure at certain time instances. We denote the number of measured time instances by $N_t$, such that each measurement point in time is given by $t_k=\frac{kT}{N_t}$.

Let us consider one fixed point in time $t_k$. Then for the discretization in space we divide the domain into $N_x$ pixels such that the attenuation is modeled by a vector $\vecu\in\R^{N_x}$. Furthermore, we have a finite set of $N_\ell$ lines $\ell_1,\dots,\ell_{N_\ell}$ for which we can measure the attenuation. The total amount of lines depends on the projection angles in each step and is typically a multiple of the sensor resolution. In case we have only one projection per time step, then $N_{\ell}$ coincides with the sensor resolution. 
The discrete measurement is then given by
\begin{equation}\label{eqn:RadonDiscreteMeas}
\vecm_i=\sum_{j=1}^{N_x} a_{i,j}\vecu_j, \hspace{0.25cm} i=1,\dots,N_\ell,
\end{equation}
where $a_{i,j}$ is the length of the line $\ell_i$ in the $j$th pixel. The measurement matrix at time $t_k$ denoted by $A^k\in\R^{N_\ell\times N_x}$ is then composed of the coefficients in \eqref{eqn:RadonDiscreteMeas}.
The matrix for the projection of all time steps is simply given as the block diagonal matrix
\begin{align}
	A=\left(
	\begin{array}{cccc}
	A^1 & & & 0 \\ 
	& A^2 & & \\
	& & \ddots &  \\
	0 & &  & A^{N_t}
	\end{array} \right).\label{matrixBlockStructure}
\end{align}

Concerning the entire problem \eqref{oldJointModel}, the discretization of the time-interval $[0,T]$ into $N_t$ steps leads to $N_t$ images $u^1,\ldots,u^{N_t}$ resp. $N_t-1$ vector fields $\boldsymbol{v}^1,\ldots,\boldsymbol{v}^{N_t-1}$ between subsequent frames. Using the previously derived discrete Radon matrix, the time-discrete counterpart of \eqref{oldJointModel} is hence given by
\begin{equation}
	\argmin_{\substack{u = (u^1,\ldots, u^{N_t})\\\boldsymbol{v}=(\boldsymbol{v}^1,\ldots\boldsymbol{v}^{N_t-1})}} \sum_{i=1}^{N_t} \frac{1}{p}\left\|A^iu^i-m^i\right\|_p^p + \alpha \left\|\nabla u^i\right\|_{2,1} + \sum_{i=1}^{N_t-1}\gamma \left\|u^{i+1} - u^i +\nabla u^i\cdot\boldsymbol{v}^i\right\|_1 + \beta\sum_{j=1}^2\left\|\nabla v^{i,j}\right\|_{2,1}
	\label{oldJointModelTimeDiscrete},
\end{equation}
where $\|\nabla u\|_{2,1}$ denotes the isotropic total variation, which takes the point wise Euclidean norm of $(u_x,u_y)\in\R^{N_x}\times\R^{N_x}$ and afterwards sums up the resulting vector in $\R^{N_x}$. 
\subsection{Minimization}
Despite showing the existence of a minimizer, its calculation is numerically challenging. Problems arise from the non-convexity and non-linearity of the optical flow term, the non-differentiability of the $L^1$--norm and finally several linear operators acting on $u$ and $\boldsymbol{v}$. To address these issues, the joint model \eqref{oldJointModelTimeDiscrete} can be transformed into a two-step method
\begin{align}
u_{l+1} = \argmin_{u} & \sum_{i=1}^{N_t} \frac{1}{p}\left\|A^iu^i-m^i\right\|_p^p  + \alpha \left\|\nabla u^i\right\|_{2,1} + \gamma  \sum_{i=1}^{N_t-1}\left\|u^{i+1} - u^i +\nabla u^i \cdot \boldsymbol{v}^i_l\right\|_1 \label{eq:simultanTVTVunconstrainedSubU},\\
\boldsymbol{v}_{l+1} = \argmin_{\boldsymbol{v}} & \sum_{i=1}^{N_t-1}  \left\|u^{i+1}_{l+1} - u^i_{l+1}+\nabla u^i_{l+1}\cdot \boldsymbol{v}^i\right\|_1 + \frac{\beta}{\gamma}\sum_{j=1}^2\left\|\nabla v^{i,j}\right\|_{2,1}  \label{eq:simultanTVTVunconstrainedSubV}
\end{align}
that alternatingly solves a problem for the image sequence $u$ using information from $\eqref{eq:simultanTVTVunconstrainedSubV}$ and a problem for the flow sequence $\boldsymbol{v}$ using information from \eqref{eq:simultanTVTVunconstrainedSubU}. Using block diagonal operators, see \eqref{matrixBlockStructure}, equation \eqref{eq:simultanTVTVunconstrainedSubU} and \eqref{eq:simultanTVTVunconstrainedSubV} can be simplified into problems of the form
\begin{align}
u_{l+1} = \argmin_{u} & \frac{1}{p}\left\|Au-m\right\|_p^p  + \alpha \left\|\nabla u\right\|_{2,1} + \gamma  \left\|Tu\right\|_1 \label{eq:simultanTVTVunconstrainedSubUSimple},\\
\boldsymbol{v}_{l+1} = \argmin_{\boldsymbol{v}} & \left\|\hat{T}\boldsymbol{v} - b\right\|_1 + \frac{\beta}{\gamma}\sum_{j=1}^2\left\|\nabla v^{j}\right\|_{2,1}  \label{eq:simultanTVTVunconstrainedSubVSimple},
\end{align}
where the operator $T$ depends on $\boldsymbol{v}_{l}$ and $\hat{T}$ depends on $u_{l+1}$. The terms $u^{i+1}_{l+1} - u^i_{l+1}$ from \eqref{oldJointModelTimeDiscrete} can be combined to the vector $b$ in \eqref{eq:simultanTVTVunconstrainedSubVSimple}.
Due to this transformation, each of these subproblems is linear and convex but still non-differentiable. Moreover, the alternating scheme tends to end up in local minima rather than in convergence to the solution of \eqref{oldJointModel}.

In order to minimize the single subproblems for reconstruction and for motion estimation, we use a primal-dual approach, which was introduced in \cite{chambolle2011first}. Therefore, we rewrite the given problems as saddle point formulations of the form
\begin{align*}
\min_{u\in\mathcal{X}}\max_{p\in\mathcal{Y}}  \left\langle Ku,p \right\rangle -F^*(p)
\end{align*}
and subsequently apply the iteration scheme proposed in \cite{chambolle2011first}. In what follows we address the problems for $u$ and $\boldsymbol{v}$ in detail. The pseudocode in Algorithm \ref{algorithmMain} then gives a sketch of the alternating minimization strategy.

{\bf Reconstruction.} Within this part we restrict ourselves to the case of an $L^1$ data fidelity where an extension to an $L^2$ term is straight forward. Since all terms of \eqref{eq:simultanTVTVunconstrainedSubU} contain operators, we dualize each term and obtain the saddle-point problem
%
\begin{align*}
\min_{u}\max_{p_1,p_2,p_3} \sum_{i=1}^{3} \langle K_iu,p_i\rangle - F^*_i(p_i),
\end{align*}
with 
\begin{align*}
K_1u = Au, & \quad F^*_1(p_1) = \delta_{\{ \bar{p}_1 : \|\bar{p}_1\|_\infty \leq 1 \}}(p_1) + \langle p_1,f\rangle,\\
K_2u = \nabla u, & \quad F^*_2(p_2) = \delta_{\{ \bar{p}_2 : \|\bar{p}_2\|_{2,\infty} \leq \alpha \}}(p_2),\\
K_3u = Tu, & \quad F^*_3(p_3) = \delta_{\{ \bar{p}_3 : \|\bar{p}_3\|_\infty \leq \gamma \}}(p_3).\\
\end{align*}
Here $\delta_\mathcal{C}$ denotes an indicator function on the set $\mathcal{C}$ defined as
\begin{align*}
	\delta_\mathcal{C}(u) = \begin{cases}
	0 & \text{if } u\in\mathcal{C}\\
	\infty & \text{else}
	\end{cases}.
\end{align*}
An application of the primal-dual method to the above problem yields the following iteration scheme:
\begin{align}\begin{split}\label{eq:iterativeSchemeU}
p_1^{k+1} &= \pi_{\{ \bar{p_1} : \|\bar{p_1}\|_{\infty} \leq 1 \}}(p^k_1+\sigma_u A\bar{u}^k-\sigma_u f),\\
p_2^{k+1} &= \pi_{\{ \bar{p_2} : \|\bar{p_2}\|_{2,\infty} \leq \alpha \}}(p^k_2+\sigma_u \nabla\bar{u}^k),\\
p_3^{k+1} &= \pi_{\{ \bar{p_3} : \|\bar{p_3}\|_{\infty} \leq \gamma \}}(p^k_3+\sigma_u T\bar{u}^k),\\
u^{k+1} &= u^k-\tau_u (A^Tp_1^{k+1} + \nabla^Tp_2^{k+1} + T^Tp_3^{k+1}),\\
\bar{u}^{k+1} &= 2u^{k+1}-u^k,
\end{split}\end{align}
where $\pi_\mathcal{C}$ denotes a projection of the input argument onto the set $\mathcal{C}$ and $\sigma_u,\tau_u$ are valid stepsizes as explained in \cite{chambolle2011first}.


{\bf Motion Estimation.}
The saddle point formulation for the motion estimation problem can be derived in analogy to the reconstruction problem as follows

\begin{align*}
\min_{\boldsymbol{v}=(v_1,v_2)}\max_{q_1,q_2,q_3} \sum_{i=1}^{3} \langle K_i\boldsymbol{v},q_i\rangle - F^*_i(q_i),
\end{align*}
where 
\begin{align*}
K_1\boldsymbol{v} = \hat{T}\boldsymbol{v}, & \quad F^*_1(q_1) = \delta_{\{ \bar{q}_1 : \|\bar{q}_1\|_{\infty} \leq 1 \}}(q_1)  - \langle q_1,b\rangle,\\
K_2\boldsymbol{v} = \nabla v_1, & \quad F^*_2(q_2) =  \delta_{\{ \bar{q}_2 : \|\bar{q}_2\|_{2,\infty} \leq \frac{\beta}{\gamma} \}}(q_2),\\
K_3\boldsymbol{v} = \nabla v_2, & \quad F^*_3(q_3) = \delta_{\{ \bar{q}_3 : \|\bar{q}_3\|_{2,\infty} \leq \frac{\beta}{\gamma} \}}(q_3).\\
\end{align*}

Similar to the reconstruction part, we apply the iteration scheme proposed in \cite{chambolle2011first} to the given problem and end up with
\begin{align}\begin{split}\label{eq:iterativeSchemeV}
q_1^{k+1} &= \pi_{\{ \bar{q}_1 : \|\bar{q}_1\|_{\infty} \leq 1 \}}(q^k_1+\sigma_v \hat{T}\bar{\boldsymbol{v}}^k-\sigma_v b),\\
q_2^{k+1} &= \pi_{\{ \bar{q}_2 : \|\bar{q}_2\|_{2,\infty} \leq \frac{\beta}{\gamma} \}}(q^k_2+\sigma_v \nabla\bar{v}^{k}_1),\\
q_3^{k+1} &= \pi_{\{ \bar{q}_3 : \|\bar{q}_3\|_{2,\infty} \leq \frac{\beta}{\gamma} \}}(q^k_3+\sigma_v \nabla\bar{v}^{k}_2),\\
\boldsymbol{v}^{k+1} &= \boldsymbol{v}^k-\tau_v \left(\hat{T}^{T}q_1^{k+1} + \begin{pmatrix}\nabla^T&0\\0&\nabla^T\end{pmatrix} \begin{pmatrix}q_2^{k+1}\\q_3^{k+1}\end{pmatrix}\right),\\
\bar{\boldsymbol{v}}^{k+1} &= 2\boldsymbol{v}^{k+1}-\boldsymbol{v}^k.
\end{split}\end{align}

To handle large displacements, the optical flow calculation is incorporated into a coarse-to-fine pyramid with intermediate warping steps. We refer to \cite{dirks2016joint} for details.

\begin{algorithm}
	\caption{Joint Reconstruction: Main Algorithm}
	\label{algorithmMain}
	\begin{algorithmic}
		\Statex
		\Function{mainFunction}{$m$}
		\Let{$u,\boldsymbol{v}$}{0}
		\State Initialize Radon operator $A$
		\While{$r_{main}>tol_{main}$}
		\Let{$u_{old}$}{$u$}
		\Let{$\boldsymbol{v}_{old}$}{$\boldsymbol{v}$}
		\State Update operator $T$ using $\boldsymbol{v}$
		\Let{$u$}{Run scheme \eqref{eq:iterativeSchemeU} until converged}
		\State Update operator $\hat{T}$ using $u$
		\Let{$\boldsymbol{v}$}{Run scheme \eqref{eq:iterativeSchemeV} until converged}
		\Let{$r_{main}$}{$\|u-u_{old}\| + \|\boldsymbol{v}-\boldsymbol{v}_{old}\|$}
		\EndWhile
		\State \Return{$(u,\boldsymbol{v})$}
		\EndFunction
	\end{algorithmic}
\end{algorithm}


\section{Experiments}\label{sec:experiments}
For the evaluation of the proposed motion estimation and reconstruction we consider two experiments in this section. For a qualitative evaluation we consider a simulated data set of a moving ball. Since the ground truth is known we can explicitly evaluate the performance of the $L^1\text{-}TV$ and $L^2\text{-}TV$ model as reconstruction functional \eqref{eqn:RadonRecFunc} by evaluating the reconstruction errors. Based on the knowledge obtained in the simulated experiments we then apply the reconstruction algorithm to real measurement data from the $\mu$CT lab at the University of Helsinki.

A further aim is to compare the reconstruction quality of the $L^1$ and $L^2$ fidelity terms for extremely undersampled dynamic data by means of their reconstruction error. Two of the most common error measures are the $l_1$ distance and the $l_2$ distance between the reconstruction and the phantom. Thus, we also use both measures to compare the results. However, a disadvantage of their utilization is that they are not neutral with respect to the chosen norms of the model. Naturally, the error in $l_1$ is smaller for an $L^1$ data fidelity term, and the error in $l_2$ is smaller for the $L^2$ model. Thus, it is necessary to use an unbiased technique. For this purpose, the method of our choice is the Structural Similarity (SSIM) index (cf \cite{brunet2012mathematical}). The SSIM index evaluates an image by comparing the structures of two images with a perception-based model. It includes illumination as well as different contrasts in an image. For two images $u_1$ and $u_2$ the SSIM index is given by

\begin{align*}
\widehat{SSIM}(u_1,u_2)=\frac{(2\mu_1\mu_2+c_1)(2\sigma_{12}+c_2)}{(\mu_1^2+\mu_2^2+c_1)(\sigma_1^2+\sigma_2^2+c_2)},
\end{align*}
where $\mu_1$ and $\mu_2$ are the averages of $u_1$ and $u_2$, $\sigma_1$ and $\sigma_1$ are the variances of $u_1$ and $u_2$, $\sigma_{12}$ is the covariance of $u_1$ and $u_2$, and $c_1$ as well as $c_2$ are constants to prevent a division by zero. To compare the structures of two entire image sequences, we add up the SSIM index for every time step and calculate the average, i.e.
\begin{align}\label{eq:ssim}
SSIM (u_1,u_2) = \frac{1}{T+1} \sum_{t=0}^T \widehat{SSIM}\left( u_1(x,t),u_2(x,t) \right).
\end{align}
In contrast to the errors in $l_1$ and in $l_2$, the SSIM index is close to one for images that are similar to the reference image.\\

\subsection{Software experiment: Pinball}
The synthetic Pinball data set consists of a two dimensional image of $42 \times 42$ pixels. The image is recorded at 30 consecutive time steps. Throughout this period of time, a uniform and rigid ball is moving from the left side of the image frame to the right side. During the whole time, the ball resides in a stationary ellipse of medium intensity. \Cref{fig:ballGT} shows the ground truth images at a selection of time steps. To avoid inverse crime, we first compute the sinogram from a high resolution image, add $1\%$ Gaussian noise, and finally downscale it to the size of the corresponding $42 \times 42$ phantom. The noise level has been chosen to be reasonably low for accurate measurements, such that the reconstructed features are mainly depending on sampled data and chosen angles.  The regularization parameters were chosen such that the $\ell^1$ error is minimized for the $L^1$-$TV$ model, $\ell^2$ for $L^2$-$TV$ respectively. That is $\alpha = 0.1,\ \beta = 0.2,\ \gamma = 0.5$ for $p=1$; and $\alpha = 0.05,\ \beta = 0.2,\ \gamma = 8$ for $p=2$. A full space-time reconstruction takes a few minutes on a modern CPU.

\begin{figure}[t!]
\centering
\begin{picture}(430,100)
\put(0,0){\includegraphics[width=.24\textwidth]{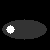}}
\put(110,0){\includegraphics[width=.24\textwidth]{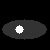}}
\put(220,0){\includegraphics[width=.24\textwidth]{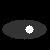}}
\put(330,0){\includegraphics[width=.24\textwidth]{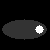}}
\end{picture}	\caption{Ground truth of Pinball data set. From left to right: time steps 1, 10, 20, 30.}
	\label{fig:ballGT}
\end{figure}

\begin{figure}[t!]
\centering
\begin{picture}(430,430)
\put(0,330){\includegraphics[width=.24\textwidth]{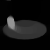}}
\put(110,330){\includegraphics[width=.24\textwidth]{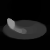}}
\put(220,330){\includegraphics[width=.24\textwidth]{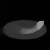}}
\put(330,330){\includegraphics[width=.24\textwidth]{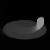}}
\put(0,220){\includegraphics[width=.24\textwidth]{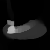}}
\put(110,220){\includegraphics[width=.24\textwidth]{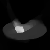}}
\put(220,220){\includegraphics[width=.24\textwidth]{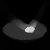}}
\put(330,220){\includegraphics[width=.24\textwidth]{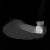}}
\put(0,110){\includegraphics[width=.24\textwidth]{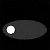}}
\put(110,110){\includegraphics[width=.24\textwidth]{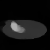}}
\put(220,110){\includegraphics[width=.24\textwidth]{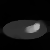}}
\put(330,110){\includegraphics[width=.24\textwidth]{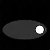}}
\put(0,0){\includegraphics[width=.24\textwidth]{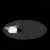}}
\put(110,0){\includegraphics[width=.24\textwidth]{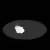}}
\put(220,0){\includegraphics[width=.24\textwidth]{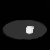}}
\put(330,0){\includegraphics[width=.24\textwidth]{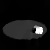}}
\end{picture}	\caption{Reconstruction result for Pinball data set calculated with $L^1$ data fidelity. From left to right: time steps 1, 10, 20, 30. From top to bottom: small angular increments, small angular increments with two angles, tracking, and randomized angles.}
	\label{fig:ballL1}
\end{figure}

\begin{figure}[t!]
\centering
\begin{picture}(430,430)
\put(0,330){\includegraphics[width=.24\textwidth]{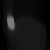}}
\put(110,330){\includegraphics[width=.24\textwidth]{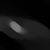}}
\put(220,330){\includegraphics[width=.24\textwidth]{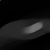}}
\put(330,330){\includegraphics[width=.24\textwidth]{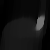}}

\put(0,220){\includegraphics[width=.24\textwidth]{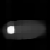}}
\put(110,220){\includegraphics[width=.24\textwidth]{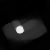}}
\put(220,220){\includegraphics[width=.24\textwidth]{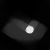}}
\put(330,220){\includegraphics[width=.24\textwidth]{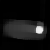}}

\put(0,110){\includegraphics[width=.24\textwidth]{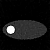}}
\put(110,110){\includegraphics[width=.24\textwidth]{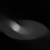}}
\put(220,110){\includegraphics[width=.24\textwidth]{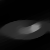}}
\put(330,110){\includegraphics[width=.24\textwidth]{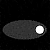}}

\put(0,0){\includegraphics[width=.24\textwidth]{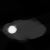}}
\put(110,0){\includegraphics[width=.24\textwidth]{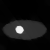}}
\put(220,0){\includegraphics[width=.24\textwidth]{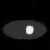}}
\put(330,0){\includegraphics[width=.24\textwidth]{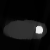}}
\end{picture}	\caption{Reconstruction result for Pinball data set calculated with $L^2$ data fidelity. From left to right: time steps 1, 10, 20, 30. From top to bottom: small angular increments, small angular increments with two angles, tracking, and randomized angles.}
	\label{fig:ballL2}
\end{figure}

The results of our computations can be seen in \Cref{fig:ballL1} for the $L^1$ data fidelity term and in \Cref{fig:ballL2} for $L^2$. Each figure illustrates the four measurement settings mentioned in Section \ref{sec:intro}. 
The results in the top rows are computed with small angular increments, i.e. single consecutive angles. As previously discussed, the information from a single angle is not sufficient to obtain a reasonable reconstruction and hence different measurement protocols need to be considered.
The second rows show results calculated assuming small angular increments with multiple angles, in this case two angles with a 90$^\circ$ offset.
The results for the tracking approach are presented in the third rows, i.e. we have a full CT scan of 60 angles available for the first and last time step.
The bottom rows present the results for one single randomized angle in each time step.

In both examples we can clearly see that the reconstructions from one incremental angle per time step can not produce satisfactory results. Even though the ellipses are rather well reconstructed, in case of the $L^1$ fidelty term sharp and for the $L^2$ fidelty term blurred, but the position and the shape of the balls are incorrect. The balls seems to follow a wave-like shape, which depends on the measurement angles. 
By increasing the angles per time step in the second measurement setup to two angles, the shape and the position of the balls are already significantly better reconstructed than in the results with only a single angle. However, for both models there are two tail-like artifacts in direction of the projections. Between those tails, i.e. in the upper part of the ellipse, edges are not properly reconstructed.
If one considers the tracking approach, both initial and end reconstructions are of course well reconstructed. However, in the intermediate time steps the position of the balls can not be reconstructed correctly. Though, the results are considerably more accurate than in the approach without any {\it a priori} information. 
Finally, for a randomized measurement protocol with a single angle, the balls are correctly located. For the $L^1$ fidelity term the shape is close to a square, whereas the $L^2$ fidelity term nicely reproduces a round appearance. The ellipse is well reconstructed for both fidelity terms. In general we note that the $L^1$ fidelty term produces sharper results, especially for the stationary ellipse and the $L^2$ fidelty term has a tendency to blur the background, but keeping the shape of the moving object closer to the original.\\

\begin{table}[h!]
	\centering
	\begin{tabular}{lcccccc}
		\hline \noalign{\smallskip}
		&\multicolumn{3}{c}{\textbf{$L^1$ data fidelity}} & \multicolumn{3}{c}{\textbf{$L^2$ data fidelity}}\\
		&$\ell_1$ error & $\ell_2$ error &SSIM & $\ell_1$ error & $\ell_2$ error &SSIM\\
		\noalign{\smallskip}\hline\noalign{\smallskip}
		small incr. ($1$ angle) & 0.4744 & 0.6485 & 0.7498 & 0.8897 & 0.6962 & 0.4310\\
		small incr. ($2$ angles) & 0.3828 & 0.4166 & 0.7275 & 0.3329 & 0.2954 & 0.7208\\
		tracking & 0.3131 & 0.5177 & 0.8240 & 0.4789 & 0.5042 & 0.6321\\
		$1$ randomized angle & 0.1978 & 0.3310 & 0.8502 & 0.2223 & 0.2586 & 0.8006\\
		\noalign{\smallskip}\hline\noalign{\smallskip}
	\end{tabular}
	\caption{Calculated relative $\ell_1$, $\ell_2$ errors, and averaged SSIM for Pinball reconstructions.}
	\label{tab:ballerrors}
\end{table}

\begin{figure}[h!]
\centering
\begin{picture}(430,220)
\put(0,110){\includegraphics[width=.24\textwidth]{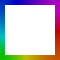}}
\put(110,110){\includegraphics[width=.24\textwidth]{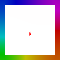}}
\put(220,110){\includegraphics[width=.24\textwidth]{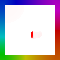}}
\put(330,110){\includegraphics[width=.24\textwidth]{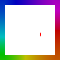}}
\put(0,0){\includegraphics[width=.24\textwidth]{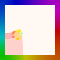}}
\put(110,0){\includegraphics[width=.24\textwidth]{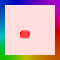}}
\put(220,0){\includegraphics[width=.24\textwidth]{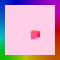}}
\put(330,0){\includegraphics[width=.24\textwidth]{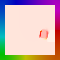}}
	\end{picture} \caption{Motion estimation results for Pinball data set calculated from randomized angles. From left to right: time steps 1, 10, 20, 29. Top: $L^1$ data fidelity, bottom: $L^2$ data fidelity.}
	\label{fig:flowrand}
\end{figure}

\noindent Additionally, to accompany the visual inspection, we computed the relative errors in $\ell_1$ and in $\ell_2$, as well as the SSIM index (see \eqref{eq:ssim}), displayed in \Cref{tab:ballerrors}. Comparing the performances of the $L^1$ and of the $L^2$ data fidelity for each experimental setting reveals that as expected the error in $\ell_1$ is always smaller for the results calculated with the $L^1$ data fidelity, whereas the $\ell_2$ error is smaller for the results of the $L^2$ data fidelity. However, the SSIM index, which we expect to be more neutral with respect to the chosen norm, indicates that the $L^1$ norm outperforms the $L^2$ norm for every single approach. This is a consequence of the distinctly better reconstruction of the ellipses. Concerning different experimental settings, the approach considering randomized angles achieves the best results by far. For both data fidelities all error measures indicate that this approach yields the best outcome. Consistent with the visual inspection, the approach with small angular increments with single angles can be considered as the worst performing approach.

To conclude this chapter, we present in \Cref{fig:flowrand} the flow fields corresponding to the approach with randomized angles and for both fidelity terms. The flow fields generally estimate the motion in the correct direction. Nevertheless, there are obvious differences between the flow fields estimated with the $L^1$ and the $L^2$ data fidelity. For the $L^2$ data fidelity the entire movement of the ball is visible in the flow field. In contrast to the $L^1$ data fidelity, here the model only recognizes motion at the edges of the ball. For both data fidelity terms the flow fields for the first time steps are not correctly estimated. The reason for this is the missing possibility to use {\it a priori} information for the estimation from previous time steps that can be used as initialization.\\

\subsection{Hardware experiment: Rolling Stones} 

\begin{figure}[h!]
\centering
\begin{picture}(450,280)
\put(0,145){\includegraphics[width=.32\textwidth]{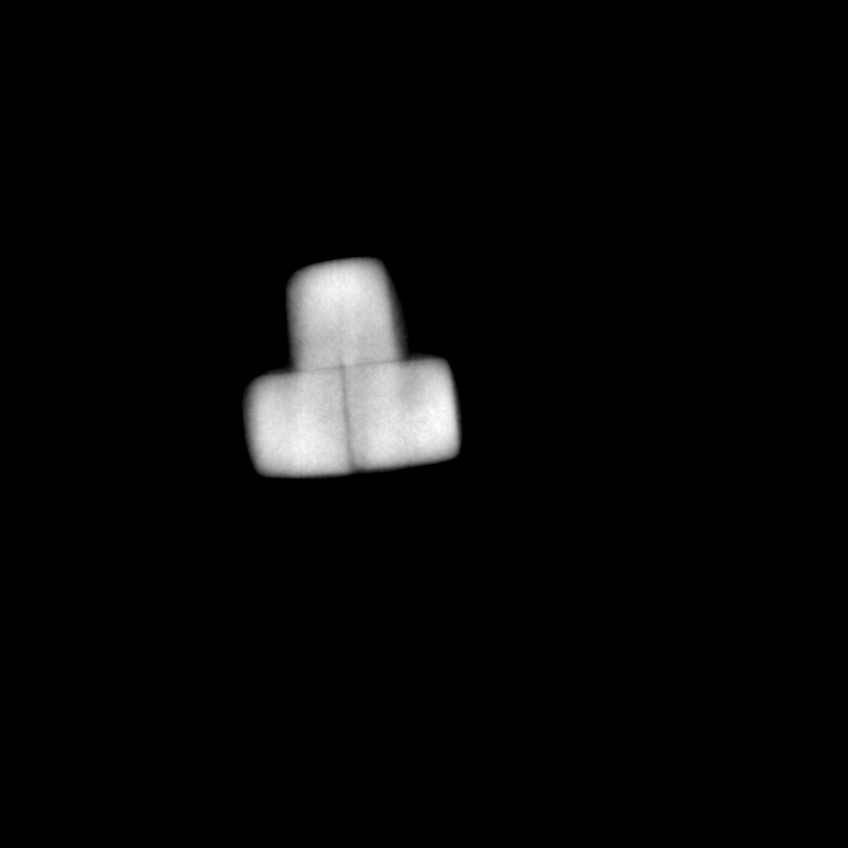}}
\put(150,145){\includegraphics[width=.32\textwidth]{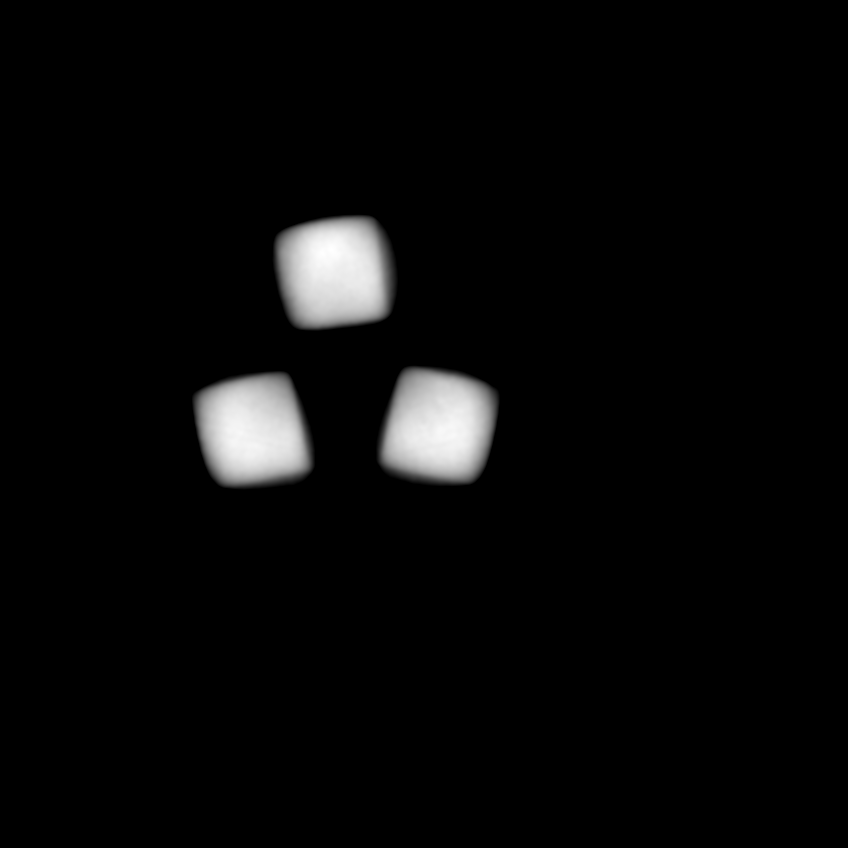}}
\put(300,145){\includegraphics[width=.32\textwidth]{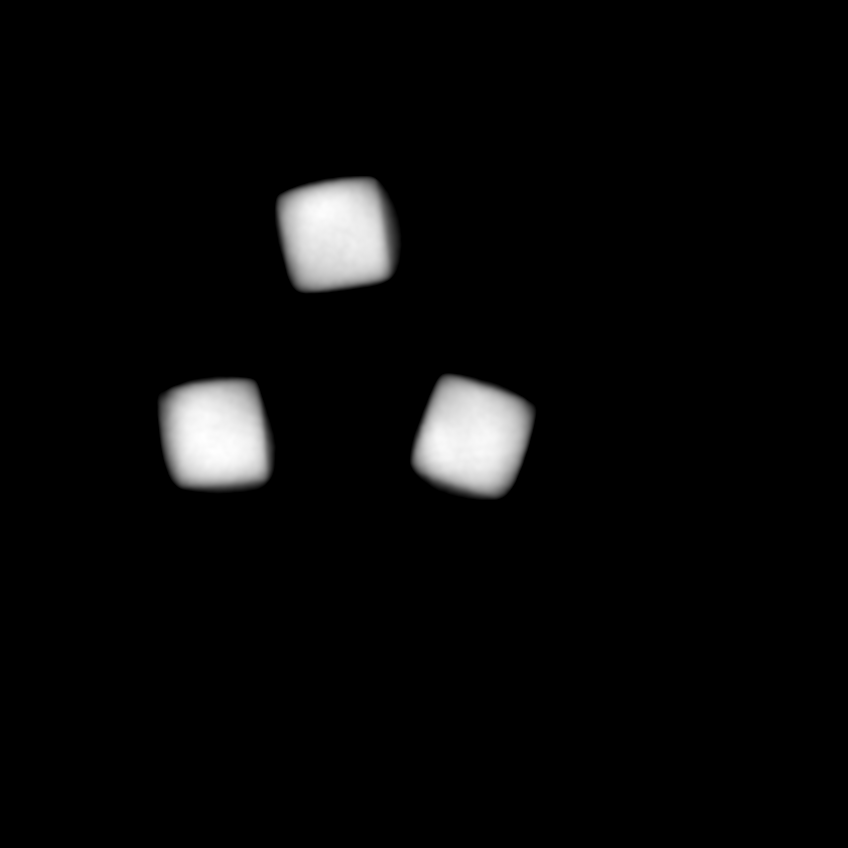}}
\put(0,-5){\includegraphics[width=.32\textwidth]{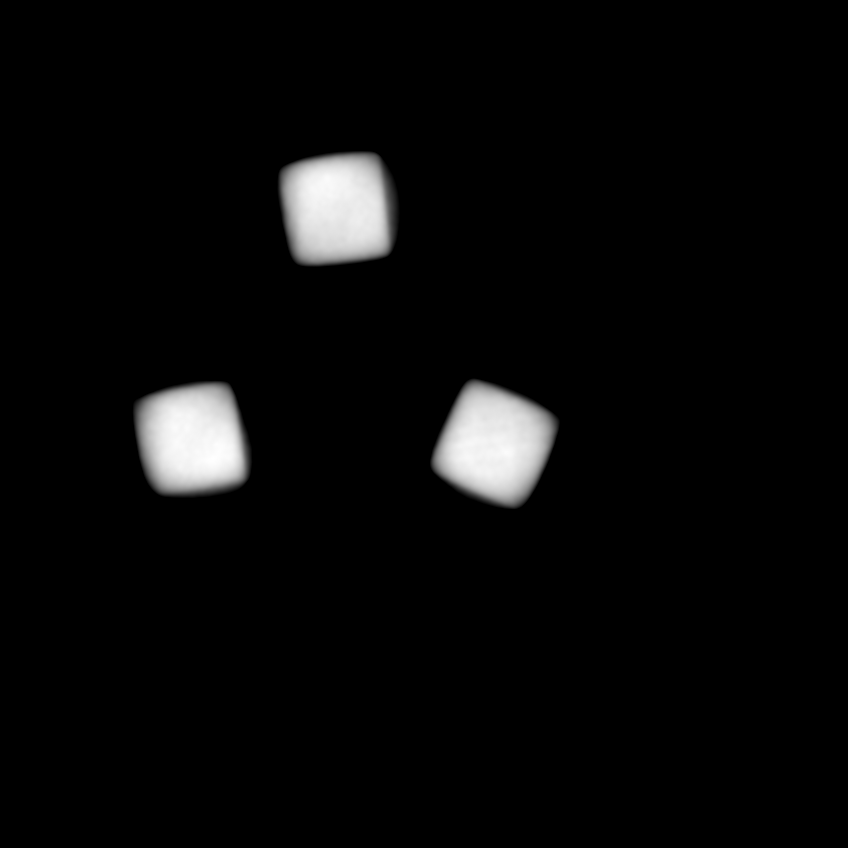}}
\put(150,-5){\includegraphics[width=.32\textwidth]{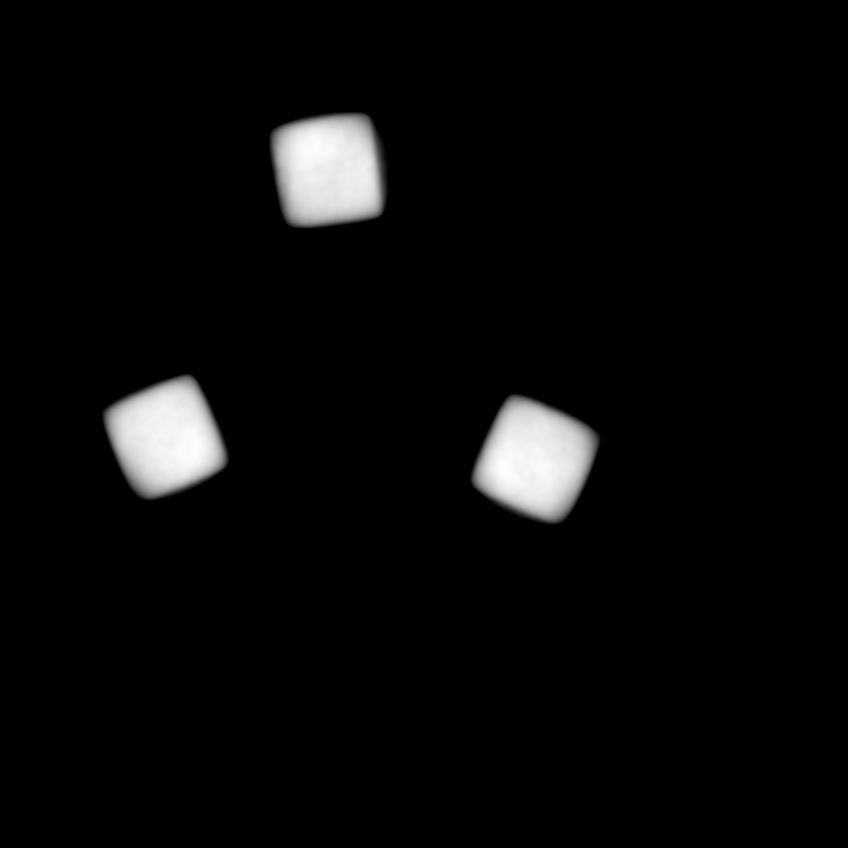}}
\put(300,-5){\includegraphics[width=.32\textwidth]{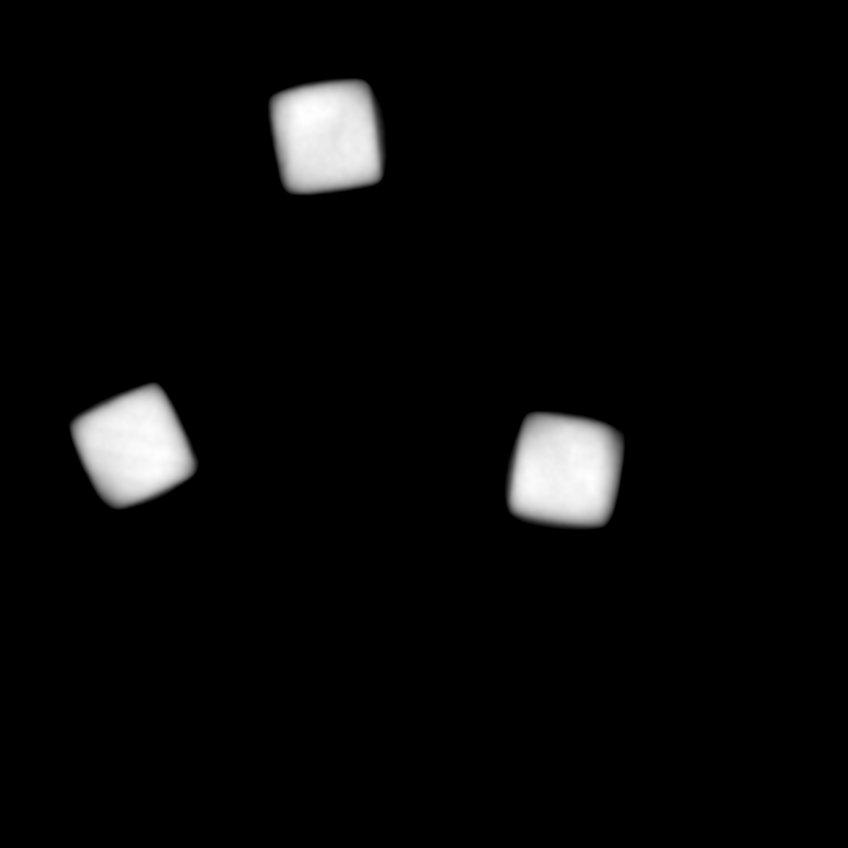}}
\end{picture}	\caption{Full-angle and high-resolution reconstruction of the Rolling Stones data. From top left to bottom right: time steps 1, 7, 13, 18, 25, 30.}
	\label{fig:stonesGT}
\end{figure}

The Rolling Stones data set consists of images of size $42 \times 42$ pixels, measured at 30 consecutive time steps. Even though we aim at being able to recover continuous movement measured with an extremely limited amount of angles, the measurements were actually recorded from 60 equally distributed angles in a stop-and-go approach. This has the advantage that we are able to use the exact same data set for different arrangements of angles as well as having a reference reconstruction from 60 angles as ground truth. \Cref{fig:stonesGT} shows the reconstruction from 60 angles for a representative selection of time steps, computed by a simple smoothed and therefore differentiable $L^2$-$TV$ variant especially suitable for large-scale data, a detailed description for the used procedure can be found in \cite{hamalainen2014total}.

The Rolling Stones data depicts three ceramic stones of approximately 25 mm$^2$, which are initially located next to each other in the center of the domain. In each time step the stones move further apart from each other to the boundary of the imaging domain. Since the stones were moved manually during the measurements, the vector and direction of movement differs for every stone and every time step.  We have seen in the software experiments that the $L^1$ data fidelity works best for reconstructing this kind of data and hence we restrict ourselves in the following to the $L^1$-$TV$ model. In \Cref{fig:sinoStones} we present the sinograms that are used in the following for the joint image reconstruction and motion estimation. \\

\begin{figure}[t!]
\centering
\begin{picture}(400,345)
\put(0,175){\includegraphics[height=.29\textheight]{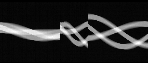}}
\put(0,-5){\includegraphics[height=.29\textheight]{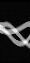}}
\put(125,-5){\includegraphics[height=.29\textheight]{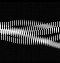}}
\put(330,-5){\includegraphics[height=.29\textheight]{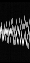}}
\end{picture}
\caption{Measured sinograms of the Rolling Stones data set. Top: Tracking approach with 60 angles at the first and last time step. Bottom from left to right: small angular increments with one angle, small angular increments with two angles, one randomized angle per time step}
\label{fig:sinoStones}
\end{figure}

We omit the reconstructions for small angular increments with one angle here, since the reconstructions were simply not satisfactory. On the other hand for small angular increments with two angles we obtain quite informative reconstructions, as displayed in \Cref{fig:stones2Ang}. The reconstruction results for the tracking setting can be seen in \Cref{fig:stonesBegEnd} and the corresponding reconstructions for the randomized angles are presented in \Cref{fig:stones30Rand}.

\begin{figure}[h!]
\centering
\begin{picture}(450,280)
\put(0,145){\includegraphics[width=.32\textwidth]{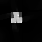}}
\put(150,145){\includegraphics[width=.32\textwidth]{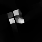}}
\put(300,145){\includegraphics[width=.32\textwidth]{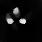}}
\put(0,-5){\includegraphics[width=.32\textwidth]{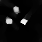}}
\put(150,-5){\includegraphics[width=.32\textwidth]{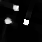}}
\put(300,-5){\includegraphics[width=.32\textwidth]{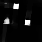}}
\end{picture}	\caption{$L^1$-$TV$ reconstruction result for the Rolling Stones from small angluar increments with two angles per time step. From top left to bottom right: time steps 1, 7, 13, 18, 25, 30.}
	\label{fig:stones2Ang}
\end{figure}

\begin{figure}[t!]
\centering
\begin{picture}(450,280)
\put(0,145){\includegraphics[width=.32\textwidth]{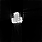}}
\put(150,145){\includegraphics[width=.32\textwidth]{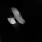}}
\put(300,145){\includegraphics[width=.32\textwidth]{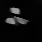}}
\put(0,-5){\includegraphics[width=.32\textwidth]{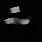}}
\put(150,-5){\includegraphics[width=.32\textwidth]{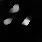}}
\put(300,-5){\includegraphics[width=.32\textwidth]{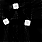}}
\end{picture}	\caption{$L^1$-$TV$ reconstruction result for the Rolling Stones from tracking. From top left to bottom right: time steps 1, 7, 13, 18, 25, 30.}
	\label{fig:stonesBegEnd}
\end{figure}

\begin{figure}[t!]
\centering
\begin{picture}(450,280)
\put(0,145){\includegraphics[width=.32\textwidth]{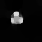}}
\put(150,145){\includegraphics[width=.32\textwidth]{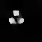}}
\put(300,145){\includegraphics[width=.32\textwidth]{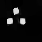}}
\put(0,-5){\includegraphics[width=.32\textwidth]{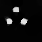}}
\put(150,-5){\includegraphics[width=.32\textwidth]{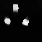}}
\put(300,-5){\includegraphics[width=.32\textwidth]{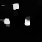}}
\end{picture}	\caption{$L^1$-$TV$ reconstruction result for the Rolling Stones from one randomized per time step. From top left to bottom right: time steps 1, 7, 13, 18, 25, 30.}
	\label{fig:stones30Rand}
\end{figure}

\begin{figure}[h!]
\centering
\begin{picture}(450,275)
\put(0,140){\includegraphics[width=.32\textwidth]{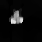}}
\put(150,140){\includegraphics[width=.32\textwidth]{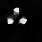}}
\put(300,140){\includegraphics[width=.32\textwidth]{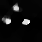}}
\put(0,-10){\includegraphics[width=.32\textwidth]{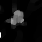}}
\put(150,-10){\includegraphics[width=.32\textwidth]{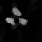}}
\put(300,-10){\includegraphics[width=.32\textwidth]{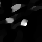}}
\end{picture}
	\caption{$L^1$-$TV$ reconstruction result for Rolling Stones data set calculated from one angle per time in a random arrangement. Top: 15 total time steps, bottom: 8 total time steps. From left to right: time steps 1, 13, 25 (taken from the full 30 time steps).}
	\label{fig:stonesLessAng}
\end{figure}

For all three measurement setups presented we can reconstruct the position of the stones quite clearly. Especially in case of small angular increments with two angles and randomized angles, we can separate the stones already from the beginning and clearly track the movement. For the tracking approach the initial and end states are clearly reconstructed due to the full angle data, but the position of the stones during the movement can only be identified after they have separated sufficiently.

The randomized angles provide a superior reconstruction quality with respect to the amount of used data. The motivation of this study is to reduce the amount of necessary measurements for dynamic data as far as possible and hence the randomized angles are most successful. We point out that we only have one projection image per time step and hence we are not able to reduce the data any further. However, we try to lower the amount of used projections even more by reducing the time steps and hence increasing the spacial offset between frames.
In \Cref{fig:stonesLessAng} reconstructions for a total of 15 and 8 time steps are presented. For 15 projections, the separation as well as the position of the stones are still well reconstructed, just a slightly stronger blurring occurs due larger movements between frames.
Regarding the results calculated from only 8 projections, the blurring has strongly increased and the shape of the stones is not clear anymore. This could be considered as the limit of our approach to produce reasonable results.

\newpage 
\section{Conclusions and outlook}\label{sec:conclusions}  
We introduced a framework to combine motion estimation and reconstruction in X-ray tomography in a joint model. The aim of this study is to illustrate that one can estimate the motion from the measured data in a variational framework in order to reconstruct the dynamics of the measured object in space-time. For estimating the motion we utilized the optical flow framework, which is already well-established in image registration, but has been only recently introduced to inverse problems. The forward problem in our framework is modeled by a time-dependent Radon transform that is well-defined in a finite time setting. We then combined the reconstruction task and the motion estimation to a joint model, which can be solved in an alternating way with modern optimization techniques. Additionally, we propose a probabilistic perspective on error modeling that will be studied further in future research.

The proposed model has been applied to simulated and real phantoms with extremely undersampled data, i.e. we went as low as one projection per time step and yet were able to produce informative results capturing the dynamics of the system correctly. The experiments showed that an $L^1$-$TV$ model for the reconstruction is most powerful for this kind of data. Furthermore, we obtained the best results with randomly chosen projection angles in each time instance. This will be of special interest for further studies in the context of compressed sensing in X-ray tomography in particular for dynamic systems.

Optical flow respectively the continuity equation as motion model is rather restrictive in the context of some inverse problems in medical imaging, since one cannot introduce new mass to the system, e.g. in terms of tracers or signals. Hence it is important to investigate different motion models. A first extension that comes to ones mind is to allow input to the system by considering non-zero Neumann boundary conditions. Relevant applications include cardiac scans, where a tracer is injected to the patients blood stream to monitor blood flow through the heart. Many imaging modalities also include diffusion processes and, therefore, are not suitable for the optical flow model. We leave this and further extensions to future research.

\section*{Acknowledgments}
This work has been supported by the German Science Exchange Foundation DAAD via Project 57162894, Bayesian Inverse Problems in Banach Space, as well as by the Academy of Finland through the Finnish Centre of Excellence in Inverse Problems Research 2012--2017, decision number
250215. MB, HD and LF acknowledge further support by ERC via Grant EU FP 7 - ERC Consolidator Grant 615216 LifeInverse. AH was partially supported by FiDiPro project of the Academy of Finland, decision number 263235. TH was supported by Academy of Finland via project 275177.

\bibliographystyle{siam}
\bibliography{Inverse_problems_references_2016}

\end{document}